\newtheorem{theorem}{Theorem}[section]
\newtheorem{lemma}[theorem]{Lemma}
\newtheorem{question}{Question}
\newtheorem{remark}[theorem]{Remark}
\newtheorem{corollary}[theorem]{Corollary}
\newtheorem{proposition}[theorem]{Proposition}
\newtheorem{example}[theorem]{Example}
\DeclareMathOperator{\ext}{Ext}
\DeclareMathOperator{\extr}{Extr}
\DeclareMathOperator{\conv}{conv}
\numberwithin{equation}{section}
\begin{document}

\title{Extreme points and geometric aspects of compact convex sets in asymmetric normed spaces}

\author{Natalia Jonard-P\'erez and  Enrique A. S\'anchez-P\'erez}

%
%\email{(N.\,Jonard-P\'erez) natalia.jonard@um.es}
%
%\email{(E. A.\,S\'anchez-P\'erez) easancpe@mat.upv.es}

\subjclass[2010]{46A50, 46A55, 46B50, 52A07, 52A20}

\keywords{Asymmetric normed space, Compact set, Convex set, Extreme point}

\thanks{The first author has been supported by CONACYT (Mexico) under grant 204028.
The second author has been supported by the Ministerio de Econom\'{\i}a y Competitividad (Spain) under grant
 MTM2012-36740-C02-02.}

%\address[N. Jonard-P\'erez]{Departamento de  Matem\'aticas, Universidad de Murcia, 30100 Espinardo, Murcia, Spain.}
%
%\address[E. A. S\'anchez-P\'erez]{Instituto Universitario de Matem\'atica Pura y Aplicada,
%Universidad Polit\'ecnica de Valencia,
%Camino de Vera s/n, 46022
%Valencia, Spain.}

\maketitle\markboth{ N. JONARD-P\'EREZ AND E. A. S\'ANCHEZ P\'EREZ}
{EXTREME POINTS  OF COMPACT CONVEX SETS OF ASYMMETRIC SPACES}

\begin{abstract}
The Krein-Milman theorem states that every compact convex subset in a locally compact convex space is the closure of the convex hull of its extreme points. Inspired in this result, we investigate the existence of extreme points in compact convex subsets of asymmetric normed spaces. We focus our attention in the finite dimensional case, giving a geometric description of all compact convex subsets of a finite dimensional asymmetric normed space.
\end{abstract}

\section{Introduction}

An \textit{asymmetric normed space} is a real vector space $X$ equipped with a so called \textit{asymmetric norm} $q$. This  means that $q:X\to [0,\infty)$ is a function satisfying
\begin{enumerate}
\item $q(tx)=tq(x)$ for every $t\geq 0$ and $x\in X$,
\item $q(x+y)\leq q(x)+q(y)$ and
\item $q(x)=0=q(-x)$ if and only if $x=-x=0$.
\end{enumerate}
Any asymmetric norm induces a non symmetric topology on $X$ that is generated by the asymmetric open balls $B_q(x,\varepsilon)=\{y\in X\mid q(y-x)<\varepsilon\}.$ This topology is a $T_0$ topology in $X$ for which the vector sum on $X$ is continuous. However, in general this topology is not even Hausdorff and the scalar multiplication  is not continuous. Thus $(X,q)$ fails to be a topological vector space. 

Any asymmetric normed space $(X,q)$ has an associated (symmetric) norm. That is the norm $q^{s}:X\to [0,\infty)$ defined by the formula:
$$q^{s}(x)=\max\{q(x), q(-x)\}.$$

Compactness on these spaces has been widely studied  and nowadays there are some very interesting results that describe the general structure of  compact sets in asymmetric normed spaces (see  \cite{afgs}, \cite{new}  and \cite{gar}). When the linear space has finite (linear) dimension, it is known that the separation axiom $T_1$ implies $T_2$ ---and thus normed--- and therefore all the properties concerning the topology of those spaces are perfectly known. The general case is then when the topology has no other separation axiom  than $T_0$. 

An asymmetric normed lattice is a classic example of a non-Hausdorff asymmetric normed space,. Namely,  if $(X,\|\cdot\|, \leq)$ is a Banach lattice, there is a canonical way to define an asymmetric norm in $X$ by means of the formula 
$$q(x):=\|x\vee 0\|,\quad x\in X.$$
In this case the asymmetric norm $q$ is called  an \textit{asymmetric lattice norm} and the pair $(X,q)$ is an \textit{asymmetric normed lattice}.  This special kind of asymmetric normed spaces is important and interesting, mainly by the applications in theoretical computer science, and particularly in complexity theory 
(see, e.g. \cite{gar4}).

An interesting problem related with compactness in asymmetric normed spaces is the existence of a so called center. Namely, we say that $K'$ is a \textit{center} for a compact subset $K$ of an asymmetric normed space $(X,q)$ if $K'$ is   $q^{s}$-compact  (compact in the topology generated by the norm $q^{s}$)  and $$K'\subset K\subset K'+\theta(0)$$
where $\theta(0)=\{x\in X\mid q(x)=0\}$.
A $q$-compact set with a center is called \textit{stronly $q$-compact}  (or simply, \textit{strongly compact}).  
It is well known that not all $q$-compact sets in an asymmetric normed space are strongly compact (see \cite[Example 12]{afgs} and \cite[Example 4.6]{new}). However, in certain cases the existence of a center characterizes the $q$-compactness (\cite[Section 5]{afgs}).
Moreover, in \cite{new} it was proved that strong compactness and compactness coincide in the class of $q^{s}$-closed compact sets in a finite dimensional asymmetric normed lattice.  Furthermore,  it was proved in in \cite{Jon San}  that every $q$-compact convex set in a $2$-dimensional asymmetric normed lattice is strongly compact (even if it is not $q^{s}$-closed). This result shows that convexity plays an important role while working with compact sets in asymmetric normed spaces.

 In relation to convexity, a big effort has been made to translate the classic results of functional analysis to the non asymmetric case (see \cite{cobzas}).   One of these results is the Krein-Milman Theorem.
It states that every compact convex subset of a locally convex space is the closure of the convex hull of its extreme points. In particular, each compact convex subset of a locally convex space has at least an extreme point.
Concerning the asymmetric case, it was proved in \cite{cobzas} that every $T_1$ asymmetric normed space satisfies the Krein-Milman Theorem. 

However, in the general case this result is not longer true, not even in finite dimensional asymmetric normed spaces.  For example,
let us consider the asymmetric norm $|\cdot |_a:\mathbb R\to [0,\infty)$ in $\mathbb R$ given by $|t|_a=\max\{0,t\}$. The set $(-1,1]$ is a compact convex set and its only extreme point is $1$. Thus $(-1,1]$ cannot be the convex hull of its extreme points. Even more, the closure of $\{1\}$ (in the asymmetric topology of $\mathbb R_a$) coincides with the interval $[1,\infty)$, which is far from being $(-1,1]$.

The main purpose of this work is to study the geometric structure of compact convex sets in asymmetric normed spaces. In particular, we are interested in the existence of extreme points in compact convex sets of these spaces. Although  we focus our attention on the finite dimensional case, some of our results are also valid in the infinite dimensional setting.

We prove in Theorem~\ref{t:main} that every $q$-compact convex set of a finite dimensional asymmetric normed space has at least one extreme point. Later, in section~\ref{seccion geometric},  extreme points of convex compact sets are used to give a geometric description of the compact convex subsets of a finite dimensional asymmetric normed space.  Somehow, this geometric description is related with the notion of a strongly compact set, and it generalizes in some aspects a previous work due by the autors in the 2-dimensional case (see \cite{Jon San}). 

\section{Preliminaries}

In this section we recall some important definitions and results that will be used throughout the paper. 

Consider a convex set $A$ contained in a linear space $X$. A point 
$x\in A$ is an \textit{extreme} point of $A$ if $x=y=z$ whenever $y,z\in A$ and $x=\lambda y+(1-\lambda)z$ for some $\lambda\in (0,1)$. Similarly, an open half line $R=\{a+tb\mid a,b\in X, ~t> 0\}$ is called an \textit{extreme ray} of $A$ if $y,z\in R$ whenever $\lambda y+(1-\lambda)z\in R$, where  $y, z\in A$ and $\lambda\in (0,1)$. If $R=\{a+tb\mid a,b\in X, ~ t> 0\}$ is an extreme ray of $A$ such that the extreme $a$ lies in $A$, then $a$ is an extreme point of $A$. Further, if $A$ is closed then the extreme of every extreme ray is contained in $A$.

In certain cases, the set of extreme points and the set of extreme rays of a convex set $A$ determine  the structure  of $A$ itself. Indeed, in 1957,  V. L. Klee gave a generalization of the Krein-Milman Theorem for locally compact closed subsets of a locally convex linear space. This result will be used in this work and we state it as follows.

\begin{theorem}[\cite{Klee}]\label{t:klee}
Let $C$ be a locally compact closed convex subset of a locally convex (Hausdorff) linear space. Then 
$$C=\overline{\conv \big(\ext (C)\cup\extr (C)\big)}.$$
Additionally, if $C$ is finite dimensional then actually $$C=\conv \big(\ext (C)\cup\extr (C)\big).$$
\end{theorem}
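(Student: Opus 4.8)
The plan is to derive the theorem from the Krein--Milman theorem together with the theory of recession cones and Klee's slicing method. The first step is to isolate the line-free case: I would assume, as the theorem tacitly requires, that $C$ contains no line, for otherwise the conclusion fails---a finite-dimensional closed half-space is locally compact, closed and convex yet has neither extreme points nor extreme rays, so the two sides cannot agree. Once $C$ is line-free, the degenerate subcase is that in which $C$ is \emph{compact}: then $C$ contains no half-line, so $\extr(C)=\emptyset$ and the assertion $C=\overline{\conv(\ext(C))}$ is exactly the Krein--Milman theorem, which in finite dimensions Carath\'eodory's theorem sharpens to $C=\conv(\ext(C))$. All the genuine work is in the unbounded case.

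For the unbounded case the governing object is the recession cone
$$\operatorname{rec}(C)=\{d\mid x+td\in C\ \text{for all}\ t\geq 0\},$$
which for closed convex $C$ is independent of the chosen $x\in C$ and is a closed convex cone, pointed since $C$ is line-free and locally compact since $C$ is. Following Klee's method, I would slice $C$ by half-spaces $\{f\le\alpha\}$ with $f$ strictly positive on $\operatorname{rec}(C)\setminus\{0\}$; such an $f$ is coercive on $C$, so each slice $C\cap\{f\le\alpha\}$ is compact and convex and Krein--Milman applies to it. As $\alpha\to\infty$ the extreme points of these slices either stabilize to extreme points of $C$ or recede to infinity along half-line faces, the latter producing the extreme rays; taking the closed convex hull of everything so obtained would give $C=\overline{\conv(\ext(C)\cup\extr(C))}$.

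To remove the closure in the finite-dimensional case I would argue by induction on $d=\dim(\operatorname{aff}C)$. The base cases $d\le 1$ (a point, a segment, or a ray) are immediate. For the inductive step, a point $x$ on the relative boundary lies in a proper exposed face $F$; since a face of a face is again a face, $\ext(F)\subseteq\ext(C)$ and $\extr(F)\subseteq\extr(C)$, and the inductive hypothesis applied to the lower-dimensional line-free set $F$ places $x$ in $\conv(\ext(C)\cup\extr(C))$. A point $x$ in the relative interior is pushed to the boundary by intersecting $C$ with a line through $x$: line-freeness forces this intersection to be a segment or a ray, exhibiting $x$ as a convex combination of relative-boundary points together with, in the unbounded direction, a contribution that must be carried by an extreme ray. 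Carath\'eodory's theorem bounds the number of summands, so the relevant coefficient simplices are compact and $\conv(\ext(C)\cup\extr(C))$ is already closed.

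The step I would treat most carefully---and which I expect to be the main obstacle---is the precise identification of the extreme rays, because an extreme ray of $C$ is a half-line \emph{face} of $C$, which is strictly stronger than being an extreme ray of $\operatorname{rec}(C)$: for instance the region $\{(s,t)\mid t\ge s^{2}\}$ has a one-dimensional recession cone but no extreme ray at all. The delicate bookkeeping therefore lies in showing that the unbounded directions surfacing in the boundary reductions really do organize into genuine half-line faces emanating from extreme points, rather than into mere recession directions. In the infinite-dimensional statement the companion difficulty is analytic: one must extract the compact slices from local compactness alone and justify the limiting argument that delivers only the \emph{closed} convex hull. I expect these two points, the face-theoretic matching of the extreme rays and the control of the closure through slicing, to be where the substance of the argument lies.
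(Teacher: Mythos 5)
The paper offers no proof of this statement: it is quoted as Klee's theorem with a citation to [Klee], so your attempt can only be judged on its own correctness, not against an internal argument. Your opening observation is correct and worth making explicit: as printed, the statement needs a ``contains no line'' hypothesis (a closed half-plane in $\mathbb R^2$ is closed, convex and locally compact, yet has no extreme points and no extreme rays), Klee's original theorem carries that hypothesis, and the paper only ever applies the result to sets $K+\theta(0)$ that it separately shows (Corollary~\ref{c:noline in compact}) contain no line.

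However, your proposal has genuine gaps; it is a plan rather than a proof, and the places you defer are exactly where the theorem lives. First, in the slicing argument the assertion that extreme points of $C\cap\{f\le\alpha\}$ ``either stabilize to extreme points of $C$ or recede to infinity along half-line faces'' \emph{is} the content of Klee's theorem: an extreme point of a slice lying on the cutting hyperplane $\{f=\alpha\}$ need not lie on any half-line face of $C$, and organizing such points into $\ext(C)\cup\extr(C)$ in the limit is the entire difficulty; you acknowledge this, but acknowledging a gap does not close it. (In infinite dimensions you also need to justify the existence of a continuous $f$ strictly positive on $\operatorname{rec}(C)\setminus\{0\}$, which requires the locally compact pointed cone to have a compact base.) Second, in the finite-dimensional induction the ray case is mishandled: if a line through a relative interior point $x$ meets $C$ in a ray $\{a+td\mid t\ge 0\}$, that ray passes through $\rint(C)$ and therefore is never an extreme ray, so there is no ``contribution carried by an extreme ray'' to appeal to. The standard repair is different: since $\operatorname{rec}(C)$ is closed and pointed, in dimension at least $2$ one can choose the direction $v$ of the line so that neither $v$ nor $-v$ lies in $\operatorname{rec}(C)$ (take $v$ in the kernel of a functional strictly positive on $\operatorname{rec}(C)\setminus\{0\}$); then $C\cap(x+\mathbb R v)$ is a compact segment and the ray case simply never arises. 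Third, the claim that Carath\'eodory's theorem makes $\conv\big(\ext(C)\cup\extr(C)\big)$ ``already closed'' is invalid reasoning: the convex hull of an unbounded closed set need not be closed (in $\mathbb R^2$, the convex hull of a horizontal ray together with a point off that ray omits some of its limit points), and Carath\'eodory plus compact coefficient simplices only yields closedness when the underlying set is compact. Closedness of the hull is a consequence of the theorem, not an ingredient one may assume; in the correct induction the closure never needs to be ``removed'' because it is never introduced.
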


Recall that $\conv (A)$ denotes the convex hull of $A$, i.e., the smallest convex set containing $A$.

Let $(X,q)$ be an asymmetric normed space and $q^{s}:X\to [0,\infty)$ the associated norm given by
$$q^{s}(x)=\max\{q(x),q(-x)\}.$$

 For every $x\in X$ and $\varepsilon\geq 0$, we will use the following notation:
$$B_q(x, \varepsilon) =\{y\in X\mid q(y-x)<\varepsilon\}$$
$$B_q[x,\varepsilon]=\{y\in X\mid q(y-x)\leq \varepsilon\}.$$
Similarly  we define $B_{q^{s}}(x,\varepsilon)$ and $B_{q^{s}}[x,\varepsilon]$.
Observe that 
\begin{equation}\label{e:bolas abiertas trasladadas}
B_q(x,\varepsilon)=x+B_q(0,\varepsilon),
\end{equation}
\begin{equation}\label{e:bolas cerradas trasladadas}
B_q[x,\varepsilon]=x+B_q[0,\varepsilon].
\end{equation}
The sets $B_q(x,\varepsilon)$ are called \textit{open balls}, while $B_q[x,\varepsilon]$ are called \textit{closed balls}. These sets are always convex. Further, if $\varepsilon>0$, the set $B_q(0,\varepsilon)$ is \textit{absorbing}, i.e., for every $x\in X$ there exists $t>0$ such that $tx\in B_q(0,\varepsilon)$ (see \cite{cobzas}).

The topology in $(X,q)$ is the one generated by the sets $B_q(x,\varepsilon)$ with $x\in X$ and $\varepsilon>0$. This topology is usually denoted by $\tau_q$. In certain cases, we will need to use the topology generated by the norm $q^{s}$, i.e., the topology $\tau_{q^{s}}$ determined by the sets $B_{q^{s}}(x,\varepsilon)$ with $x\in X$ and $\varepsilon >0$. For this reason, in order to avoid any confusion, it is important to distinguish both topologies  at the moment of dealing with them.  For instance,
we will say that a set $A\subset X$ is $q$-compact ($q^{s}$-compact) if it is compact in the topology $\tau_q$ ($\tau_{q^{s}}$). Similarly, we define the notion of $q$-open, $q$-closed, $q$-continuous ($q^{s}$-open, $q^{s}$-closed, $q^{s}$-continuous), etc.
Furthermore, given a set $A\subset X$, we will use the symbol $\overline{A}$ and $\overline{A}^{s}$ to denote the closure of $A$ with respect to the topology $\tau_q$ and $\tau_{q^{s}}$, respectively.  In general, the closed balls $B[x,\varepsilon]$ are not $q$-closed. However, as we will prove in Lemma~\ref{l:q continua} below, these sets are always $q^{s}$-closed.

Given an asymmetric normed space $(X,q)$, the set $\theta(x)$ is defined as $$\theta (x)=\{y\in X\mid q(y-x)=0\}.$$
The set $\theta (0)$ consisting of all points  $y$ with $q(y)=0$ is a convex cone  of particular importance in the study of compactness in asymmetric normed spaces (see \cite{gar}). 

In this note we will use several times the following useful properties involving the set $\theta (0)$.

\begin{proposition}\label{p:theta} Let $(X,q)$ be an asymmetric normed space.  
\begin{enumerate}[\rm(1)]
\item For any $q$-open subset $U\subset X$, $U=U+\theta(0)$.
\item A set $K\subset X$ is $q$-compact if and only if $K+\theta (0)$ is $q$-compact.
\end{enumerate}
\end{proposition}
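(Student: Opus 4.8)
The plan is to prove part (1) directly and then derive both implications of part (2) from it, since the translation invariance of $q$-open sets is precisely what makes the two compactness notions coincide.

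For part (1), the inclusion $U \subseteq U + \theta(0)$ is immediate because $0 \in \theta(0)$ (note $q(0) = 0$ by positive homogeneity). For the reverse inclusion, I would first recall the standard fact that the balls $B_q(u,\varepsilon)$ centered at points of $X$ form a base for $\tau_q$: if $u \in B_q(x,\varepsilon)$ and $r := q(u-x) < \varepsilon$, then the triangle inequality gives $B_q(u,\varepsilon - r) \subseteq B_q(x,\varepsilon)$. Consequently, for any $u$ in the $q$-open set $U$ there is $\varepsilon > 0$ with $B_q(u,\varepsilon) \subseteq U$. Now fix $z \in \theta(0)$; since $q\big((u+z) - u\big) = q(z) = 0 < \varepsilon$, we get $u + z \in B_q(u,\varepsilon) \subseteq U$. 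As $u \in U$ and $z \in \theta(0)$ were arbitrary, $U + \theta(0) \subseteq U$, proving the equality.

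For part (2), both directions rest on the observation that, thanks to part (1), a family $\{U_i\}_{i\in I}$ of $q$-open sets covers $K$ if and only if it covers $K + \theta(0)$: indeed $\big(\bigcup_i U_i\big) + \theta(0) = \bigcup_i (U_i + \theta(0)) = \bigcup_i U_i$, so $K \subseteq \bigcup_i U_i$ forces $K + \theta(0) \subseteq \big(\bigcup_i U_i\big) + \theta(0) = \bigcup_i U_i$, while the reverse implication follows from the trivial inclusion $K \subseteq K + \theta(0)$. Suppose first that $K$ is $q$-compact and let $\{U_i\}$ be a $q$-open cover of $K + \theta(0)$; it covers $K$, so a finite subfamily $U_{i_1},\dots,U_{i_n}$ covers $K$, and by the observation this same finite subfamily covers $K + \theta(0)$. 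Conversely, if $K + \theta(0)$ is $q$-compact and $\{U_i\}$ is a $q$-open cover of $K$, then by the observation it is also a $q$-open cover of $K + \theta(0)$, which therefore admits a finite subcover, and that finite subcover a fortiori covers $K$. This establishes the equivalence.

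The argument is short and presents no serious obstacle; the only point requiring care is the verification that point-centered balls form a base for $\tau_q$ in part (1), which is the step that makes the crucial identity $U = U + \theta(0)$ available. Once that identity is in hand, part (2) is a purely formal manipulation of open covers.
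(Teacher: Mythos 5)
Your proof is correct. Note, however, that the paper itself contains no proof of this proposition: it simply refers the reader to Lemma 4 and Proposition 5 of the cited work of Garc\'{\i}a-Raffi (\emph{Compactness and finite dimension in asymmetric normed linear spaces}). So what you have produced is a self-contained argument where the paper offers only a citation, and your argument is the natural one: the two essential ingredients are exactly the ones you isolate, namely (i) the identity $U = U + \theta(0)$ for $q$-open $U$, which rests on the fact that point-centered balls form a base for $\tau_q$ together with $q(z)=0$ for $z \in \theta(0)$, and (ii) the purely formal observation that a $q$-open family covers $K$ if and only if it covers $K+\theta(0)$, from which both implications of part (2) follow at once. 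Your care in justifying that open sets contain balls around each of their points (via the triangle inequality) is exactly the step that is often glossed over, and your derivation of both directions of (2) from the single covering equivalence is cleaner than treating the two implications separately. In short: the proposal fills in, correctly and efficiently, precisely the argument the paper delegates to the literature.
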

For the  proof of this result, the reader can consult Lemma 4 and Proposition 5 of \cite{gar}.

%%%%Si se pone lo de los right bounded hay que definirlos.

\section{Geometric structure of compact convex subsets}

The purpose of this section consists on proving basic lemmas concerning the topology and geometry of asymmetric normed spaces  that will be used in the following sections of this paper. Some of these results may be well-known, but we include them  here for the sake of completeness.
 
 \begin{lemma}\label{l:q continua}
 Let $(X,q)$ be an asymmetric normed space. The map $q:(X,q^{s})\to \mathbb R$  is continuous ($q^{s}$-continuous). In particular, the $q$-open balls $B_q(z,\varepsilon)$ are $q^{s}$-open and the sets $B_q[z,\varepsilon]$ are $q^{s}$-closed.
 \end{lemma}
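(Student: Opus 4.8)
The plan is to show that $q$ is in fact $1$-Lipschitz as a map from the normed space $(X,q^{s})$ into $\mathbb{R}$; $q^{s}$-continuity then follows at once, and the statements about the balls drop out by writing them as preimages of intervals under continuous maps.

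First I would establish the fundamental estimate $|q(x)-q(y)|\le q^{s}(x-y)$ for all $x,y\in X$. To obtain it, apply subadditivity (property (2) of the asymmetric norm) to the decomposition $x=(x-y)+y$, which gives $q(x)\le q(x-y)+q(y)$, hence $q(x)-q(y)\le q(x-y)$. Symmetrically, writing $y=(y-x)+x$ yields $q(y)-q(x)\le q(y-x)$. Since by definition $q(x-y)\le q^{s}(x-y)$ and $q(y-x)\le q^{s}(y-x)=q^{s}(x-y)$, both differences are bounded by $q^{s}(x-y)$, which is exactly the claimed inequality.

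From this inequality, $q$ is $1$-Lipschitz with respect to $q^{s}$ and therefore $q^{s}$-continuous. For the ``in particular'' assertions I would use that $(X,q^{s})$ is an ordinary normed space, so the translation $y\mapsto y-z$ is $q^{s}$-continuous; composing it with $q$ shows that the map $f_z\colon y\mapsto q(y-z)$ is $q^{s}$-continuous. Since $q\ge 0$, we have $B_q(z,\varepsilon)=f_z^{-1}\big((-\infty,\varepsilon)\big)$ and $B_q[z,\varepsilon]=f_z^{-1}\big((-\infty,\varepsilon]\big)$, which are respectively $q^{s}$-open and $q^{s}$-closed, being preimages of an open (resp.\ closed) subset of $\mathbb{R}$ under a continuous map.

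There is no serious obstacle in this lemma; the only point that requires care is the symmetry identity $q^{s}(y-x)=q^{s}(x-y)$, which is precisely what allows a single one-sided triangle inequality to control both $q(x)-q(y)$ and $q(y)-q(x)$ simultaneously and thus to produce a genuine, symmetric Lipschitz bound. Everything else reduces to a routine application of subadditivity together with the preimage description of the balls.
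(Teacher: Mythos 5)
Your proof is correct and follows essentially the same route as the paper: both establish the Lipschitz estimate $|q(x)-q(y)|\le q^{s}(x-y)$ via subadditivity applied to $x=(x-y)+y$ and $y=(y-x)+x$, and then realize the balls as preimages of intervals under a $q^{s}$-continuous map. The only cosmetic difference is that you compose $q$ with the translation $y\mapsto y-z$ to handle general centers directly, whereas the paper treats balls centered at the origin first and then translates the resulting open/closed sets; these are interchangeable steps.
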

\begin{proof}
For any  $x,y \in X$ we have that 
$$q(x)=q(x-y+y)\leq q(x-y)+q(y)\leq q^{s}(x-y)+q(y)$$
and therefore $q(x)-q(y)\leq q^{s}(x-y)$. Symmetrically, 
$$q(y)=q(y-x+x)\leq q(y-x)+q(x)\leq q^{s}(y-x)+q(x)=q^{s}(x-y)+q(x).$$
The previous inequality  implies that $q(y)-q(x)\leq q^{s}(x-y)$ and thus $|q(x)-q(y)|\leq q^{s}(x-y)$. This proves that $q$ is $q^{s}$-continuous.

Finally, for every $\varepsilon>0$, the set $B_q(0,\varepsilon)$ is the inverse image of the open set $(-\infty, \varepsilon)$ and therefore it is open in $(X, q^{s}).$ Similarly, $B_q[0,\varepsilon]$ is the inverse image of the closed interval $[0,\varepsilon]$ and so it is closed in $(X, q^{s})$. To complete the proof we simply observe that $B_q(z,\varepsilon)$ ($B_q[z,\varepsilon]$) is the translation of a $q^{s}$-open ($q^{s}$-closed) set and therefore it is $q^{s}$-open ($q^{s}$-closed). Now the proof is complete.
\end{proof}

\begin{lemma}\label{l:qsclosure is compact}
Let $K$ be a $q$-compact set. Then the $q^{s}$-closure $\overline{K}^{s}$ is $q$-compact too.
\end{lemma}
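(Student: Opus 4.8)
The plan is to reduce the statement to Proposition~\ref{p:theta}(2) by establishing the set identity $\overline{K}^{s}+\theta(0)=K+\theta(0)$. First I would record the relationship between the two topologies: since $q\le q^{s}$, we have $B_{q^{s}}(x,\varepsilon)\subseteq B_q(x,\varepsilon)$ for every $x$ and $\varepsilon$, so each $q$-open set is $q^{s}$-open; hence $\tau_q\subseteq\tau_{q^{s}}$ and in particular $K\subseteq\overline{K}^{s}$. Adding the convex cone $\theta(0)$, which satisfies $\theta(0)+\theta(0)=\theta(0)$, already gives $K+\theta(0)\subseteq\overline{K}^{s}+\theta(0)$, so the only nontrivial inclusion to prove is $\overline{K}^{s}\subseteq K+\theta(0)$.

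The core of the argument is this inclusion. Fix $x\in\overline{K}^{s}$. Because $q^{s}$ is a genuine norm, $\tau_{q^{s}}$ is metrizable and I may choose a sequence $(k_n)\subseteq K$ with $q^{s}(k_n-x)\to 0$; in particular $q(x-k_n)\le q^{s}(k_n-x)\to 0$. Viewing $(k_n)$ as a net in the $q$-compact set $K$, compactness furnishes a subnet $(k_{n_\alpha})$ that $\tau_q$-converges to some $k\in K$, that is $q(k_{n_\alpha}-k)\to 0$. The triangle inequality for $q$ then yields
$$q(x-k)\le q(x-k_{n_\alpha})+q(k_{n_\alpha}-k),$$
where the first term tends to $0$ as a subnet of $q(x-k_n)\to 0$ and the second tends to $0$ by the choice of $k$. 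Hence $q(x-k)=0$, so $x-k\in\theta(0)$ and $x\in k+\theta(0)\subseteq K+\theta(0)$. This proves $\overline{K}^{s}\subseteq K+\theta(0)$, and combined with the previous paragraph gives $\overline{K}^{s}+\theta(0)=K+\theta(0)$.

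To conclude I would invoke Proposition~\ref{p:theta}(2) twice. Since $K$ is $q$-compact, $K+\theta(0)$ is $q$-compact; by the identity just established, $\overline{K}^{s}+\theta(0)$ is $q$-compact; and applying the proposition once more to the set $\overline{K}^{s}$ shows that $\overline{K}^{s}$ itself is $q$-compact.

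The main obstacle is the middle step: producing a single point $k\in K$ with $q(x-k)=0$ \emph{exactly}. Metric convergence in $q^{s}$ only provides a sequence whose forward $q$-distance to $x$ tends to $0$, and the non-Hausdorff nature of $\tau_q$ means $\tau_q$-limits need not be unique, so one cannot simply pass to a limit. The role of $q$-compactness is precisely to upgrade this approximate statement into exact membership in $K+\theta(0)$ through the convergent subnet. Some care is also required to respect the asymmetry of $q$, making sure that the direction controlled by $q^{s}$-convergence (namely $q(x-k_n)\to 0$) is exactly the one appearing in the triangle inequality above.
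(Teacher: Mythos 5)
Your proof is correct, but it takes a genuinely different route from the paper's. The paper argues directly with covers: given a $q$-open cover of $\overline{K}^{s}$, it picks for each $x$ a ball $B_q(x,2\delta_x)$ inside some cover element, extracts a finite subcover of $K$ by the half-radius balls $B_q(x_i,\delta_{x_i})$, and then uses the fact that the finite union $\bigcup_i B_q[x_i,\delta_{x_i}]$ is $q^{s}$-closed (Lemma~\ref{l:q continua}) to trap $\overline{K}^{s}$ inside $\bigcup_i B_q(x_i,2\delta_{x_i})$, hence inside finitely many cover elements. You instead prove the structural inclusion $\overline{K}^{s}\subseteq K+\theta(0)$ and then apply both directions of the equivalence in Proposition~\ref{p:theta}(2). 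Your argument is sound: compactness yields convergent subnets in any topological space (Hausdorffness is needed only for uniqueness of limits, not existence), and you handle the asymmetry correctly, since $q^{s}$-convergence of $k_n$ to $x$ controls precisely the term $q(x-k_n)$ that appears in your triangle inequality. It is worth noting that your key inclusion is essentially the content of the paper's later Lemma~\ref{l:K mas theta cerrado} (that $K+\theta(0)$ is $q^{s}$-closed, which together with $K\subseteq K+\theta(0)$ gives $\overline{K}^{s}\subseteq K+\theta(0)$ immediately); the paper proves that lemma by a covering argument rather than by nets, so in effect you prove that result first and derive the present lemma from it. What your approach buys is this stronger structural fact --- every $q^{s}$-limit point of $K$ differs from a point of $K$ by an element of $\theta(0)$ --- which is of independent interest. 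What the paper's approach buys is self-containment: it needs neither nets nor Proposition~\ref{p:theta}, only the elementary observation that closed balls are $q^{s}$-closed.
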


\begin{proof}
Let $\mathcal U$ be a $q$-open cover for $\overline{K}^{s}$. Thus, for every $x\in \overline{K}^{s}$ there exist $\delta_x>0$  and $U_x\in\mathcal U$ such that $B_q(x,2\delta_x)\subset U_x$.
Obviously the family $\{B_q(x,\delta_x)\}_{x\in \overline{K}^{s}}$ is a $q$-open cover for $K$ and therefore we can pick a finite subcover $\{B_{q}(x_i,\delta_{x_i})\}_{i=1}^{n}$.
Then, we have 
$$K\subset\bigcup\limits_{i=1}^{n} B_q(x_i, \delta_{x_i})\subset \bigcup\limits_{i=1}^{n} B_q[x_i,\delta_{x_i}].$$
Since $\bigcup_{i=1}^{n} B_q[x_i,\delta_{x_i}]$ is closed, we also have that
$$\overline{K}^{s}\subset \bigcup\limits_{i=1}^{n} B_q[x_i,\delta_{x_i}]\subset \bigcup\limits_{i=1}^{n} B_q(x_i,2\delta_{x_i})\subset \bigcup\limits_{i=1}^{n}U_{x_i}$$
This proves that $\{U_{x_i}\}_{i=1}^{n}$ is a finite subcover for $\overline{K}^{s}$ and therefore it is $q$-compact,  as desired.
 
\end{proof}

\begin{lemma}\label{l:no line in ball}
For every asymmetric normed space $(X,q)$ and for every $\varepsilon>0$, the $q$-open ball $B_q(0,\varepsilon)$ contains no line.
\end{lemma}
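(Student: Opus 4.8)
The plan is to argue by contradiction. Suppose some line $L=\{a+tb\mid t\in\mathbb R\}$ with $b\neq 0$ were entirely contained in $B_q(0,\varepsilon)$, so that $q(a+tb)<\varepsilon$ for every real $t$. The decisive observation is that property (3) in the definition of an asymmetric norm forbids a nonzero vector $b$ from satisfying $q(b)=q(-b)=0$; hence at least one of the two quantities $q(b)$, $q(-b)$ is strictly positive. I would then show that positivity of either one forces $q$ to grow without bound along the corresponding ray of $L$, contradicting the uniform bound $\varepsilon$.

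For the estimate I would use the triangle inequality (property (2)) in the form $q(tb)=q\big((a+tb)-a\big)\leq q(a+tb)+q(-a)$, together with positive homogeneity (property (1)), which gives $q(tb)=t\,q(b)$ for $t>0$. Combining these yields $t\,q(b)\leq q(a+tb)+q(-a)<\varepsilon+q(-a)$ for all $t>0$. If $q(b)>0$, letting $t\to+\infty$ produces the contradiction. If instead it is $q(-b)$ that is positive, I would run the same computation along the opposite ray using the points $a-tb\in L$ and the identity $q(-tb)=t\,q(-b)$, obtaining $t\,q(-b)<\varepsilon+q(-a)$ and again letting $t\to+\infty$.

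No step is a genuine obstacle, since the argument is a short application of the three defining axioms. The only point requiring care is the recognition that an asymmetric norm may vanish in one direction, so one cannot bound the line using $q(b)$ alone; this is exactly where property (3) enters, guaranteeing that a nonzero direction vector is \emph{detected} by $q$ in at least one of its two orientations, which is precisely what prevents the ball from containing a full line.
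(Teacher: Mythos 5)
Your proof is correct, but it takes a genuinely different route from the paper's. The paper argues in two stages: first it rules out lines through the origin (if $q$ takes a positive value $r$ at some point of such a line, rescaling by $2\varepsilon/r$ via positive homogeneity exits the ball; if $q$ vanishes identically on the line, axiom (3) forces the line to be $\{0\}$), and then it reduces an arbitrary line $\{z+ty\}$ to that case by using the \emph{absorbing} property of $B_q(0,\varepsilon)$ to find $r<0$ with $rz\in B_q(0,\varepsilon)$, followed by a convex-combination trick that cancels the $z$-component and produces a full line through the origin inside the ball. You avoid both the absorbing property and convexity entirely: the single estimate $q(tb)\leq q(a+tb)+q(-a)<\varepsilon+q(-a)$ transfers the uniform bound from the line to its direction vector, and then axiom (3) plus homogeneity gives unbounded growth of $t\,q(b)$ or $t\,q(-b)$, a contradiction. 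Your argument is shorter and more elementary, relying only on the three defining axioms. What the paper's structure buys in exchange is a reusable template: the same absorbing-plus-convexity cancellation reappears almost verbatim in the proof of Lemma~\ref{l:muchas propiedades}(3) to handle rays $\{tx+y\mid t\geq 0\}$, where your translation trick would also work but the paper prefers a uniform method.
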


\begin{proof}
First we will prove that $B_q(0,\varepsilon)$ contains no line trough the origin. Suppose that $L=\{tx_0\mid t\in\mathbb R\}$ is contained in $B_q(0,\varepsilon)$ for some fixed point $x_0\in X\setminus\{0\}$. If $q(tx_0)=r>0$ for some $t$,  then $\frac{2\varepsilon}{r}(tx_0)\in L\subset B_q(0,\varepsilon)$. We thus have 
$$q\bigg(\frac{2\varepsilon}{r}tx_0\bigg)=\frac{2\varepsilon}{r}q(tx_0)=\frac{2\varepsilon}{r}r>\varepsilon,$$
which is a contradiction. Therefore we can conclude that $q(u)=0$ for every $u\in L$. However this is a contradiction too since $q(u)=0=q(-u)$ implies that $u=0$ for every $u\in L$.

Now we will prove that $B_q(0,\varepsilon)$ does not contain any kind of line.  Suppose that $\Lambda=\{z+ty\mid t\in\mathbb R\}$ is contained in $B_q(0,\varepsilon)$ for some fixed points $z\in X$ and $y\in X\setminus\{0\}$.

Consider the element $-z$. Since the set $B_q(0,\varepsilon)$ is absorbing (see \cite{cobzas}), there exists some $r_0>0$ with the property that $r_0(-z)\in B_q(0,\varepsilon)$; thus, there is $r <0 $ such that $r z \in B_q(0,\varepsilon)$.
Now, using the fact that $B_q(0,\varepsilon)$ is convex, we get that
$$s(rz)+(1-s)(z+ty)\in B_q(0,\varepsilon)\quad\text{for every }s\in[0,1],\text{ and }t\in\mathbb R.$$
In particular, taking $s=\frac{1}{1-r}\in (0,1)$ we get that
$$\frac{r}{1-r}z+\big(1-\frac{1}{1-r}\big)(z+ty)=\big(1-\frac{1}{1-r}\big)ty\in B_q(0,\varepsilon)$$
for every $t\in\mathbb R$. Since $1-1/(1-r)\neq 0$, we conclude that $ky\in B_q(0,\varepsilon)$ for every $k\in\mathbb R$, which contradicts the  first part of this proof. Now the lemma is proved.

\end{proof}

Since for every $\varepsilon >0$  we have $B_q[0,\varepsilon]\subset B_q(0,2\varepsilon)$, no closed ball $B_q[0,\varepsilon]$ contains a line either. Further, since any translation is an affine bijection, we infer  from equalities (\ref{e:bolas abiertas trasladadas}) and (\ref{e:bolas cerradas trasladadas}) the following corollary.

\begin{corollary}\label{c:no line in ball}
No ball (open or closed) in an asymmetric normed space contains a line.
\end{corollary}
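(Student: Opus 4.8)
The plan is to reduce every case to the single assertion already established in Lemma~\ref{l:no line in ball}, namely that the centered open ball $B_q(0,\varepsilon)$ contains no line. Two reductions are needed: first from open balls to closed balls while keeping the center at the origin, and then from balls centered at the origin to balls with an arbitrary center.

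For a closed ball centered at the origin, I would exploit the inclusion $B_q[0,\varepsilon]\subset B_q(0,2\varepsilon)$, which holds because $q(y)\leq\varepsilon$ forces $q(y)<2\varepsilon$. Since Lemma~\ref{l:no line in ball} guarantees that the larger open ball $B_q(0,2\varepsilon)$ contains no line, any subset of it, in particular $B_q[0,\varepsilon]$, contains no line either. To pass to an arbitrary center $x$, I would invoke the translation identities (\ref{e:bolas abiertas trasladadas}) and (\ref{e:bolas cerradas trasladadas}), namely $B_q(x,\varepsilon)=x+B_q(0,\varepsilon)$ and $B_q[x,\varepsilon]=x+B_q[0,\varepsilon]$. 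The map $v\mapsto v-x$ is an affine bijection of $X$, and affine bijections send lines to lines; hence if a line $\Lambda$ were contained in $B_q(x,\varepsilon)$ (respectively $B_q[x,\varepsilon]$), then $\Lambda-x$ would be a line contained in $B_q(0,\varepsilon)$ (respectively $B_q[0,\varepsilon]$), contradicting the centered case just treated.

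There is essentially no genuine obstacle here, since all the mathematical content resides in Lemma~\ref{l:no line in ball}; the corollary is pure bookkeeping that connects arbitrary balls to centered ones through an inclusion and a translation. The only point deserving a moment's attention is the observation that translation preserves the property of containing a line, and this is immediate from the fact that the affine image of a line is again a line.
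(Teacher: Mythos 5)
Your proof is correct and follows essentially the same route as the paper: the inclusion $B_q[0,\varepsilon]\subset B_q(0,2\varepsilon)$ handles closed balls at the origin, and the translation identities (\ref{e:bolas abiertas trasladadas}) and (\ref{e:bolas cerradas trasladadas}) together with the fact that translations are affine bijections reduce arbitrary centers to the origin, exactly as in the remarks preceding the corollary in the paper. Nothing is missing.
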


\begin{corollary}\label{c:noline in compact}
If $K$ is a $q$-compact subset of an asymmetric normed space $(X,q)$, then $K$  contains no line.
\end{corollary}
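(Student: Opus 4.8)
The plan is to argue by contradiction, extracting from a line inside $K$ a $q$-open cover with no finite subcover. Suppose $K$ is $q$-compact and contains a line $\Lambda=\{z+ty\mid t\in\mathbb R\}$ with $y\neq 0$. By axiom (3) we have $q^{s}(y)>0$, so at least one of $q(y)$, $q(-y)$ is strictly positive; since replacing $y$ by $-y$ leaves the set $\Lambda$ unchanged (it only reverses the parametrization), I may assume $q(y)>0$. This positivity in the direction of the line is the feature I will exploit.

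Since the family of all $q$-open balls is a $q$-open cover of $X$, and hence of $K$, compactness lets me extract a finite subcover $B_q(x_1,\varepsilon_1),\dots,B_q(x_n,\varepsilon_n)$ of $K$, which in particular covers $\Lambda$. For each $i$ I set $T_i=\{t\in\mathbb R\mid z+ty\in B_q(x_i,\varepsilon_i)\}$, so that $\bigcup_{i=1}^{n}T_i=\mathbb R$. The key estimate is that each $T_i$ is bounded above. Indeed, for $t\ge 0$ with $t\in T_i$, positive homogeneity and subadditivity give $tq(y)=q(ty)\le q\big((z-x_i)+ty\big)+q(x_i-z)<\varepsilon_i+q(x_i-z)$, so that $t<M_i:=(\varepsilon_i+q(x_i-z))/q(y)$. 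As $M_i>0$, every $t\in T_i$ (negative ones trivially) satisfies $t<M_i$, whence $T_i\subseteq(-\infty,M_i)$.

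A finite union of subsets of $\mathbb R$ each bounded above is again bounded above, so $\bigcup_{i=1}^{n}T_i\neq\mathbb R$, contradicting the fact that the $T_i$ cover $\mathbb R$. Therefore no $q$-compact subset of $(X,q)$ can contain a line.

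The step requiring the most care is the estimate bounding $T_i$, since it is exactly where the choice $q(y)>0$ and positive homogeneity (valid only for nonnegative scalars) must be handled correctly; this estimate is the quantitative heart of the matter and is the natural strengthening of Corollary~\ref{c:no line in ball}. I emphasize that the $T_i$ need \emph{not} be bounded below: a ball may well contain an entire ray in a direction where $q$ vanishes, so the argument cannot proceed by trapping the whole line in a single ball, but genuinely relies on selecting the direction in which $q$ is positive together with the finiteness of the subcover.
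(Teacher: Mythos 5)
Your proof is correct, but it follows a genuinely different route from the paper's. The paper's own proof is a two-line reduction: it covers $K$ by the nested family $\{B_q(0,m)\}_{m\in\mathbb N}$ of balls centered at the origin, extracts a finite subcover to trap $K$ inside a single ball $B_q(0,M)$, and then invokes Corollary~\ref{c:no line in ball} (no ball contains a line), which in turn rests on Lemma~\ref{l:no line in ball}, proved via convexity and the absorbing property of $B_q(0,\varepsilon)$. You instead bypass those results entirely: you take an arbitrary finite cover of $K$ by balls, choose the orientation of the line so that $q(y)>0$ (a legitimate reduction, since $y\mapsto -y$ preserves $\Lambda$ and axiom (3) forces $q^{s}(y)>0$), and show via the triangle inequality and positive homogeneity that each parameter set $T_i$ is bounded above, so finitely many of them cannot cover $\mathbb R$. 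Your key estimate $tq(y)\le q\big((z+ty)-x_i\big)+q(x_i-z)<\varepsilon_i+q(x_i-z)$ is correct, as is the handling of negative $t$. What your approach buys is self-containedness: it needs neither the convexity nor the absorbing property of balls, and it handles a general line directly rather than reducing to a line through the origin as Lemma~\ref{l:no line in ball} does. What it costs is that it silently re-proves, in quantitative form, the content of that lemma, so within the paper's architecture it duplicates work; the paper's arrangement isolates the geometric fact about balls once and then reuses it, which is why its proof of this corollary is so short.
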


\begin{proof}
Let $\{B_q(0,m)\}_{m\in\mathbb N}$ be the family of all open balls centered in the origin and having radius $m\in\mathbb N$. Clearly   
$\{B_q(0,m)\}_{m\in\mathbb N}$
is an open cover for $K$, and thus we can find $M\in \mathbb N$ such that $K\subset B_{q}(0,M)$. By corollary~\ref{c:no line in ball}, there is no line  completely contained in $B_q(0,M)$ and therefore $K$ contains no line either.
\end{proof}

\begin{lemma}\label{l:muchas propiedades}
For any asymmetric normed space $(X,q)$, the following statements hold.

\begin{enumerate}[\rm(1)]
\item $\theta (0)$ is $q^{s}$-closed. 
\item If the closed ray $R=\{tx\mid t\geq 0\}$ is contained in the ball $B_{q}(0,\varepsilon)$ then $R\subset \theta (0)$.
\item If the ray $R=\{tx+y\mid t\geq 0\}$ is contained in the ball $B_{q}(0,\varepsilon)$ then $R'=\{tx\mid t\geq 0\}\subset \theta (0)$.
\item If the ray $R=\{tx+y\mid t\geq 0\}$ is contained in the ball $B_{q}(z,\varepsilon)$ then $R'=\{tx\mid t\geq 0\}\subset \theta (0)$. 
\item If the ray $R=\{tx+y\mid t\geq 0\}$ is contained in a $q$-compact set $K\subset R$, then  $R'=\{tx\mid t\geq 0\}\subset \theta (0)$. 

\end{enumerate}

\end{lemma}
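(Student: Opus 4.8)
The plan is to establish the five statements in an order that exploits their dependencies: (1) follows at once from the earlier continuity lemma, (2) is an easy homogeneity computation, (3) is the heart of the lemma, and (4) and (5) are reductions to (3) by translation and by compactness, respectively.

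For (1), I would simply observe that $\theta(0)=\{y\mid q(y)\le 0\}=B_q[0,0]$, which is $q^{s}$-closed by Lemma~\ref{l:q continua} applied with $\varepsilon=0$; equivalently, $\theta(0)=q^{-1}(\{0\})$ is the preimage of a closed set under the $q^{s}$-continuous map $q$. For (2), since every point $tx$ with $t\ge 0$ lies in $B_q(0,\varepsilon)$, homogeneity gives $t\,q(x)=q(tx)<\varepsilon$ for all $t\ge 0$; were $q(x)>0$ this would fail for $t$ large, so $q(x)=0$ and hence $q(tx)=0$ for every $t\ge 0$, i.e. $R\subset\theta(0)$.

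The main point is (3), where the ray $R=\{tx+y\mid t\ge 0\}$ is only assumed to sit inside $B_q(0,\varepsilon)$, so I cannot read off $q(tx)$ directly from membership. Instead I would use the triangle inequality to compare the auxiliary point $tx$ with the point $tx+y$ that genuinely lies on the ray:
\[
t\,q(x)=q(tx)=q\big((tx+y)+(-y)\big)\le q(tx+y)+q(-y)<\varepsilon+q(-y).
\]
The right-hand side is a constant independent of $t$, so the bound $t\,q(x)<\varepsilon+q(-y)$ valid for all $t\ge 0$ forces $q(x)=0$. Consequently $q(tx)=t\,q(x)=0$ for every $t\ge 0$, which is exactly $R'\subset\theta(0)$.

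Finally, (4) and (5) reduce to (3). For (4), the identity (\ref{e:bolas abiertas trasladadas}) gives $B_q(z,\varepsilon)=z+B_q(0,\varepsilon)$, so $R\subset B_q(z,\varepsilon)$ is equivalent to $\{tx+(y-z)\mid t\ge 0\}\subset B_q(0,\varepsilon)$; applying (3) with offset $y-z$ yields $R'\subset\theta(0)$. For (5), the covering argument used in Corollary~\ref{c:noline in compact} produces an $M\in\mathbb N$ with $K\subset B_q(0,M)$, whence the ray $R$ lies in $B_q(0,M)$ and (3) applies again. I expect no serious obstacle: the only genuinely new idea is the uniform-in-$t$ estimate in (3), and the remaining parts are either a direct appeal to Lemma~\ref{l:q continua} or routine reductions to that estimate.
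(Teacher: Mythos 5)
Your proof is correct, but the core step (3) is argued by a genuinely different and more elementary route than the paper's. The paper proves (3) by mimicking the argument of Lemma~\ref{l:no line in ball}: since $y\in B_q(0,\varepsilon)$ and the ball is absorbing, one finds $\delta<0$ with $\delta y\in B_q(0,\varepsilon)$, and then convexity of the ball (taking the convex combination with parameter $s=\frac{1}{1-\delta}$) shows that the whole ray $\{kx\mid k\ge 0\}$ lies inside $B_q(0,\varepsilon)$, after which part (2) finishes the job. You instead bypass absorbency and convexity entirely with the one-line triangle-inequality estimate $t\,q(x)=q\bigl((tx+y)+(-y)\bigr)\le q(tx+y)+q(-y)<\varepsilon+q(-y)$, a bound uniform in $t$ that forces $q(x)=0$; this is valid because $q(-y)$ is a finite constant (the asymmetric norm takes values in $[0,\infty)$), and it is shorter and arguably cleaner than the paper's geometric argument, at the cost of not running parallel to the earlier line-exclusion lemma. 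Your reduction in (4) also differs mildly: the paper encloses $B_q(z,\varepsilon)$ in the larger origin-centered ball $B_q(0,\varepsilon+q(z))$ via the triangle inequality, while you translate the ray using the identity $B_q(z,\varepsilon)=z+B_q(0,\varepsilon)$ and apply (3) to the offset $y-z$; both are correct. Parts (1), (2) and (5) coincide with the paper's proofs.
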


\begin{proof}
(1) This follows immediately from Lemma~\ref{l:q continua}.

(2) Suppose that $R\cap \big(B_{q}(0,\varepsilon)\setminus \theta (0)\big)\neq \emptyset$. So, there exists $t_0>0$ such that $0<q(t_0x)<\varepsilon$. In this case, for every $M>\varepsilon/q(t_0x)$ we get that $(Mt_0)x\in R\subset B_{q}(0,\varepsilon)$ but
$$q(Mt_0x)=Mq(t_0x)>\varepsilon.$$
From this contradiction we conclude that $R\subset \theta (0)$.  

(3) First observe that $y=0x+y\in B_q(0,\varepsilon)$. Now, since $B_q(0,\varepsilon) $ is  absorbing, it is possible to find $\delta <0$ with the property that $\delta y\in B_{q}(0,\varepsilon)$.

Since $B_q(0,\varepsilon)$ is convex, we have that
$$s(\delta y)+(1-s)(tx+y)\in B_{q}(0,\varepsilon)\quad\text{for every }s\in[0,1],\text{ and }t\geq 0.$$
In particular, if we take $s=\frac{1}{1-\delta}\in (0,1)$ we get that
$$\frac{\delta}{1-\delta}y+\big(1-\frac{1}{1-\delta}\big)(tx+y)=\big(1-\frac{1}{1-\delta}\big)tx\in B_q(0,\varepsilon)$$
for every $t\geq 0$. Since $1-1/(1-\delta)> 0$, we conclude that $kx\in B_q(0,\varepsilon)$ for every $k\geq 0$.  Now (2) implies that $R'\subset B_q(0,\varepsilon)$, as desired.

(4) Let $t=q(z)$. If $y\in B_q(z,\varepsilon)$, then 
$$q(y)=q(y-z+z)\leq q(y-z)+q(z)<\varepsilon+t.$$
Thus, $R\subset B_q(z,\varepsilon)\subset B_q(0,\varepsilon+t)$. Now the result follows directly from (3).

(5) The family $\{B_q(0,n)\}_{n\in\mathbb N}$ is an open cover for the compact set $K$ and therefore we can find $n_0\in \mathbb N$ such that $R\subset K\subset B_q(0,n_0)$. Now the result follows directly from (3).

\end{proof}

For every $q$-compact set $K$, the associated  $q$-compact set $K+\theta(0)$ will play an important role. A useful property of this set is the following.

\begin{lemma}\label{l:K mas theta cerrado}
Let $K$ be a $q$-compact set in an asymmetric normed space $(X,q)$. Then the set $K+\theta (0)$ is $q^{s}$-closed.
\end{lemma}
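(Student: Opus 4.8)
The plan is to prove the only nontrivial inclusion $\overline{K+\theta(0)}^{s}\subseteq K+\theta(0)$, since the reverse inclusion is automatic. Write $A=K+\theta(0)$ for brevity. Because $\tau_{q^{s}}$ is the topology of the genuine norm $q^{s}$, it is metrizable, so the $q^{s}$-closure of $A$ coincides with its sequential closure. Hence it suffices to fix $x\in X$ and a sequence $(a_n)\subseteq A$ with $q^{s}(a_n-x)\to 0$, and to show that $x$ already belongs to $A$. Notice that this reduction uses only the metric nature of $\tau_{q^{s}}$, not the $q$-compactness of $A$ itself; the compactness will enter only through $K$.

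First I would extract a one-sided estimate that survives the asymmetry of $q$. Decompose each $a_n=k_n+u_n$ with $k_n\in K$ and $u_n\in\theta(0)$, which is possible since $a_n\in K+\theta(0)$. Using subadditivity of $q$, the identity $q(u_n)=0$, and the inequality $q\le q^{s}$,
\begin{equation*}
q(x-k_n)\le q(x-a_n)+q(a_n-k_n)=q(x-a_n)+q(u_n)=q(x-a_n)\le q^{s}(x-a_n)\to 0 .
\end{equation*}
Thus $q(x-k_n)\to 0$, even though we obtain no control at all over $q(k_n-x)$.

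Next I would invoke compactness of $K$ through the sequence $(k_n)$. Since $\tau_q$ is generated by the quasi-metric $(x,y)\mapsto q(y-x)$, the balls $\{B_q(z,1/m)\}_{m}$ form a countable neighbourhood base, so $\tau_q$ is first countable; and a compact first-countable space is sequentially compact (a sequence always admits a cluster point by the finite intersection property, and first countability promotes a cluster point to a subsequential limit). Hence $(k_n)$ has a subsequence with $q(k_{n_j}-k)\to 0$ for some $k\in K$. Combining this with the previous step via the triangle inequality,
\begin{equation*}
q(x-k)\le q(x-k_{n_j})+q(k_{n_j}-k)\to 0 ,
\end{equation*}
so $q(x-k)=0$, i.e.\ $x-k\in\theta(0)$ and therefore $x\in k+\theta(0)\subseteq K+\theta(0)=A$. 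This gives $\overline{A}^{s}\subseteq A$ and completes the argument.

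The main obstacle, and the point requiring care, is the asymmetry of $q$: only one-directional estimates are available, so I must orient the two convergences so that the triangle inequality closes. The estimate above yields $q(x-k_n)\to 0$ (measuring $x$ \emph{from} $k_n$), while $q$-convergence of the subsequence yields $q(k_{n_j}-k)\to 0$ (measuring $k_{n_j}$ \emph{from} $k$); these compose precisely to bound $q(x-k)$. Had either orientation been reversed the composition would fail, so the delicate verification is that sequential $q$-compactness produces $q(k_{n_j}-k)\to 0$ rather than the opposite one-sided limit.
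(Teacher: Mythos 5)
Your proof is correct, but it takes a genuinely different route from the paper's. The paper argues by contradiction with a covering argument: if $x\in\overline{K+\theta(0)}^{s}\setminus\bigl(K+\theta(0)\bigr)$, then $q(x-a)>0$ for every $a\in K$, so the balls $B_q\bigl(a,q(x-a)/2\bigr)$ form a $q$-open cover of $K$, hence of $K+\theta(0)$ by Proposition~\ref{p:theta}(1); extracting a finite subcover and enlarging to the corresponding closed balls --- which are $q^{s}$-closed by Lemma~\ref{l:q continua} --- yields a $q^{s}$-closed set that contains $K+\theta(0)$ but excludes $x$, contradicting $x\in\overline{K+\theta(0)}^{s}$. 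You instead reduce to sequences via metrizability of $\tau_{q^{s}}$, and the engine of your argument is that $K$, being $q$-compact and first countable, is sequentially compact in $\tau_q$ --- a fact that indeed holds with no Hausdorff assumption, and which you rightly identify as the delicate point and justify correctly (cluster point via the finite intersection property, promoted to a subsequential limit through a decreasing countable neighbourhood base). Your one-sided estimates are all oriented correctly: the decomposition $a_n=k_n+u_n$ gives $q(x-k_n)\to 0$, $\tau_q$-convergence of the subsequence gives $q(k_{n_j}-k)\to 0$, and these compose via the triangle inequality to $q(x-k)=0$, i.e.\ $x\in k+\theta(0)$. What the paper's approach buys is economy and self-containment: it stays at the level of open covers, invoking only facts already established in the paper, and never needs the theory of sequences in a non-Hausdorff space. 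What your approach buys is independence from Proposition~\ref{p:theta} and a slightly more informative conclusion, namely that every $q^{s}$-limit of points of $K+\theta(0)$ lands in $k+\theta(0)$ for some $k\in K$ that is a $\tau_q$-cluster point of the approximating sequence in $K$.
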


\begin{proof} It is enough to prove that $\overline{K+\theta(0)}^{s}\subset K+\theta(0)$. In order to do this, let us consider a point $x\in \overline{K+\theta(0)}^{s}$ and suppose that $x$ is not in $K+\theta(0)$. This implies that for every $a\in K$, the point $x-a$ is not in $\theta (0)$ or equivalently, $q(x-a)>0$ for every $a\in K$.
This means that $\mathcal U=\{B_q(a,\delta_a)\}_{a\in K}$ where $\delta_a=q(x-a)/2$ is an open cover for $K$ and according to Proposition~\ref{p:theta}, it also covers $K+\theta(0)$. Since $K$ is compact we can extract a finite subcover $\{B_q(a_i, \delta_{a_i})\}_{i=1}^{n}$. 
Thus we have
$$K+\theta(0)\subset \bigcup_{i=1}^{n}B_q(a_i,\delta_{a_i})+\theta(0)= \bigcup_{i=1}^{n}B_q(a_i,\delta_{a_i})\subset\bigcup_{i=1}^{n}B_q[a_i,\delta_{a_1}].$$
Since the union $\bigcup_{i=1}^{n}B_q[a_i,\delta_{a_1}]$ is $q^{s}$-closed, we also have that 
$$x\in \overline{K+\theta (0)}^{s}\subset \bigcup_{i=1}^{n}B_q[a_i,\delta_{a_1}],$$
which is a contradiction since $q(x-a_i)=2\delta_{a_i}>\delta_{a_i}$ for every $i=1,\dots, n$.
Therefore we can conclude that $\overline{K+\theta(0)}^{s}\subset K+\theta(0)$, as desired.
\end{proof}

\section{Extreme points in compact convex subsets}

\begin{theorem}\label{t:extrme points}
Let $(X,q)$ be an asymmetric normed space. Suppose that $K\subset X$ is a $q$-compact convex subset of $X$.  Then the set of extreme points of $K+\theta (0)$  is contained in $K$. 
\end{theorem}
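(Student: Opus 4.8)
The plan is to verify the containment pointwise: take an arbitrary extreme point $e$ of $K+\theta(0)$ and show that $e$ must already lie in $K$. The only structural input I need is that $\theta(0)$ is a convex cone, which is immediate from the axioms of $q$. Indeed, if $y,z\in\theta(0)$ then $q(y+z)\le q(y)+q(z)=0$, and $q(ty)=t\,q(y)=0$ for every $t\ge 0$, so $\theta(0)$ is closed under addition and under multiplication by nonnegative scalars. In particular $2v\in\theta(0)$ whenever $v\in\theta(0)$.

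With this in hand, I would write $e=a+v$ with $a\in K$ and $v\in\theta(0)$, which is possible by the very definition of $K+\theta(0)$. The key idea is then to exhibit $e$ as the midpoint of two points of $K+\theta(0)$: namely $a$ and $a+2v$. We have $a\in K\subseteq K+\theta(0)$, while $a+2v\in K+\theta(0)$ precisely because $2v\in\theta(0)$; and the identity $e=\tfrac12\,a+\tfrac12\,(a+2v)$ holds. Since $e$ is an extreme point and $\tfrac12\in(0,1)$, the definition of extreme point forces $a=a+2v=e$. Hence $2v=0$, so $v=0$ and $e=a\in K$, as desired.

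The argument is short, and I do not expect a genuine obstacle; the one point requiring a little care is the device of \emph{doubling} the $\theta(0)$-component $v$, which guarantees that $e$ lies strictly between the two auxiliary points $a$ and $a+2v$ unless $v=0$. It is worth remarking that this containment uses neither the $q$-compactness of $K$ (hence that of $K+\theta(0)$ via Proposition~\ref{p:theta}) nor the convexity of $K$; those hypotheses are relevant instead for the companion results ensuring that $K+\theta(0)$ actually possesses extreme points.
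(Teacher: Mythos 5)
Your proof is correct, and it is leaner than the one in the paper. Both arguments hinge on the same key device — the doubling trick, writing an extreme point as the midpoint of $a$ and $a+2v$ and invoking extremality to force $v=0$ — but you deploy it directly, whereas the paper wraps it inside a proof by contradiction: there, one assumes an extreme point $x_0$ of $K+\theta(0)$ lies outside $K$, uses the doubling trick only to establish the intermediate claim that $x_0\notin z+\theta(0)$ for every $z\in K$, and then runs a topological covering argument (open balls $B_q(z,\varepsilon_z)$ together with the saturation property $U=U+\theta(0)$ of $q$-open sets from Proposition~\ref{p:theta}) to contradict $x_0\in K+\theta(0)$. That covering step is in fact a detour: the statement ``$x_0\notin z+\theta(0)$ for all $z\in K$'' is, by the very definition of the Minkowski sum, already equivalent to ``$x_0\notin K+\theta(0)$'', which is exactly how your argument short-circuits it — you start from the decomposition $e=a+v$ that membership in $K+\theta(0)$ provides and let extremality kill $v$ at once. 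Your closing remark is also accurate and worth keeping: neither $q$-compactness nor convexity of $K$ enters your argument (only that $\theta(0)$ is a cone containing $0$), so the containment $\ext\big(K+\theta(0)\big)\subset K$ holds for an arbitrary subset $K$ of an asymmetric normed space; those hypotheses matter only for the companion results guaranteeing that extreme points of $K+\theta(0)$ exist at all. What the paper's formulation buys, by contrast, is essentially expository: it keeps the argument inside the circle of ideas (open covers and $\theta(0)$-saturation) used throughout the rest of the paper, but it proves nothing more.
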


\begin{proof}
Suppose that $x_0\in K+\theta(0)$ is an extreme point of $K+\theta(0)$ and assume that $x_0$ is not in $K$. 
We claim that for every $z\in K$, there exists $\varepsilon_z>0$ such that $x_0\notin B_q(z,\varepsilon_z)$. Indeed, let $z\in K$. If $x_0$ lies  in  $z+\theta(0)$ we can find $y\in \theta(0)\setminus\{0\}$ such that  $x_0=z+y$. Obviously $2y\in \theta(0)$ and therefore $z+2y\in K+\theta(0)$. Now we can write $x_0$ as the convex combination 
$$x_0=\frac{1}{2}(z+2y)+\frac{1}{2}z$$
 where both $(z+2y)$ and $z$ are distinct elements of $K+\theta(0)$. This last expression of $x_0$ is a contradiction since $x_0$ is an extreme point of $K+\theta (0)$. Thus, $x_0$ cannot be in $z+\theta(0)$ for any $z\in K$ and therefore 
 $$\varepsilon_z:=q(x_0-z)/2>0.$$
From this last inequality we conclude that $x_0\notin B_q(z,\varepsilon_z)$ for every $z\in K$. This proves the claim.

Now, the family $\{B_q(z,\varepsilon_z)\}_{z\in K}$ is a cover for $K$  and

$$K\subset \bigcup_{z\in K} B_q(z,\varepsilon_z)\subset  X\setminus\{x_0\}. $$

Moreover, by Proposition~\ref{p:theta}(1), we have that
$$K+\theta (0)\subset \bigg(\bigcup_{z\in K}B_q(z,\varepsilon_z)\bigg)+\theta (0)=\bigcup_{z\in K}B_q(z,\varepsilon_z)\subset X\setminus\{x_0\},$$
 which contradicts the fact that $x_0\in K+\theta (0)$.  Now the proof is complete.
\end{proof}

Let $K$ be a $q$-compact convex set in an asymmetric normed space $(X,q)$. By Lemma~\ref{l:qsclosure is compact}, the set $K+\theta (0)$ is $q^{s}$-closed. Further, since $K+\theta (0)$ is $q$-compact (Proposition~\ref{p:theta}(2)), it follows from Corollary~\ref{c:noline in compact}, that $K+\theta (0)$ contains no line.
If, additionally, the set $K+\theta (0)$ is locally compact in the topology determined by $q^{s}$ (for example, if $K+\theta (0)$ is finite dimensional), it follows from Theorem~\ref{t:klee}  that $K+\theta (0)$ is the closed convex hull of its extreme points and extreme rays. In particular, $K+\theta (0)$ has at least one extreme point. This fact, in combination with Theorem~\ref{t:extrme points} gives the following result.

\begin{theorem}\label{t:main}
Let $K$ be a $q$-compact convex subset of an asymmetric normed space $(X,q)$ 
with the property that $K+\theta (0)$ is $q^{s}$-locally compact. Then $K$ has at least one extreme point. In particular, if $K+\theta (0)$  has finite dimension, then $K$ has at least one extreme point.
\end{theorem}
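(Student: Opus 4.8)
The plan is to assemble Theorem~\ref{t:main} directly from the three machinery results already established, with Theorem~\ref{t:extrme points} doing the essential work of transferring an extreme point from the enlarged set $K+\theta(0)$ back to $K$. The strategy is to verify that the set $K+\theta(0)$ satisfies exactly the hypotheses of Klee's theorem (Theorem~\ref{t:klee}), apply it to obtain at least one extreme point of $K+\theta(0)$, and then invoke Theorem~\ref{t:extrme points} to place that extreme point inside $K$.

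First I would check the three hypotheses of Theorem~\ref{t:klee} for $C := K+\theta(0)$, working in the symmetric normed space $(X,q^s)$, which is a locally convex Hausdorff linear space. The convexity of $K+\theta(0)$ follows from the convexity of $K$ together with the fact that $\theta(0)$ is a convex cone. For \emph{closedness} I would cite Lemma~\ref{l:K mas theta cerrado}, which gives that $K+\theta(0)$ is $q^s$-closed. \emph{Local compactness} in the $q^s$-topology is precisely the standing hypothesis of the theorem, so it can be assumed directly; in the finite-dimensional case it holds automatically because $K+\theta(0)$ is $q$-compact by Proposition~\ref{p:theta}(2), hence $q^s$-bounded, and a closed bounded set in a finite-dimensional normed space is $q^s$-compact and in particular locally compact. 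I should also record that $K+\theta(0)$ contains no line: by Proposition~\ref{p:theta}(2) it is $q$-compact, so Corollary~\ref{c:noline in compact} applies. This is the hypothesis of Klee's theorem that guarantees the existence of extreme points rather than merely extreme rays.

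With the hypotheses verified, Theorem~\ref{t:klee} yields
$$K+\theta(0) = \overline{\conv\big(\ext(K+\theta(0)) \cup \extr(K+\theta(0))\big)}.$$
Since $K+\theta(0)$ is nonempty (it contains $K$, which is nonempty as a compact convex set) and contains no line, the extreme-point set $\ext(K+\theta(0))$ must be nonempty: a locally compact closed convex set containing no line cannot be expressed as the closure of the convex hull of extreme rays alone, so at least one genuine extreme point exists. Thus I fix an extreme point $x_0 \in \ext(K+\theta(0))$.

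Finally, Theorem~\ref{t:extrme points} states exactly that the set of extreme points of $K+\theta(0)$ is contained in $K$, so $x_0 \in K$; and an extreme point of the larger convex set $K+\theta(0)$ that happens to lie in the subset $K$ is a fortiori an extreme point of $K$. This produces an extreme point of $K$ and completes the argument. The second assertion, for $K+\theta(0)$ finite-dimensional, follows because finite-dimensionality forces $q^s$-local compactness as noted above, so it is a special case of the general statement. I expect the only genuinely delicate point to be the nonemptiness of the extreme-point set: one must argue carefully that a \emph{line-free} locally compact closed convex set has at least one extreme point, rather than being built entirely from extreme rays, which is where Corollary~\ref{c:noline in compact} is indispensable.
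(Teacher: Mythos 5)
Your route is the paper's own: verify that $C=K+\theta(0)$ is convex, $q^{s}$-closed (Lemma~\ref{l:K mas theta cerrado}), line-free ($q$-compactness via Proposition~\ref{p:theta}(2) plus Corollary~\ref{c:noline in compact}), and $q^{s}$-locally compact; apply Klee's theorem (Theorem~\ref{t:klee}) to get an extreme point of $C$; then use Theorem~\ref{t:extrme points} to conclude that this point lies in $K$, where it is a fortiori extreme. This is exactly how the paper argues, and for the main clause of the theorem your proof is correct. (Incidentally, you cite the right closedness lemma; the paper's text mistakenly points to Lemma~\ref{l:qsclosure is compact} for this step.)

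There is, however, one genuinely wrong step: your justification of the finite-dimensional case. You claim that $K+\theta(0)$, being $q$-compact, is ``$q^{s}$-bounded,'' hence (closed, bounded, finite-dimensional) ``$q^{s}$-compact.'' This is false: $q$-compactness gives no control whatsoever on $q^{s}$-boundedness, and $K+\theta(0)$ is $q^{s}$-unbounded whenever $\theta(0)\neq\{0\}$, since $\theta(0)$ is a cone. Concretely, in $(\mathbb R, |\cdot|_a)$ with $K=\{0\}$ one gets $K+\theta(0)=(-\infty,0]$, which is $q$-compact, $q^{s}$-closed and one-dimensional, but neither $q^{s}$-bounded nor $q^{s}$-compact. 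The correct argument (and the only one available) is: a $q^{s}$-closed convex set whose affine hull is finite dimensional is a closed subset of that affine hull, which is a closed, finite-dimensional affine subspace homeomorphic to some $\mathbb R^{n}$; closed subsets of locally compact spaces are locally compact. Local compactness, not compactness, is what finite-dimensionality buys, and it is all the theorem needs. Finally, a smaller point: you flag the nonemptiness of $\ext(K+\theta(0))$ as delicate but only assert it. It is actually immediate from facts recorded in the paper's Preliminaries: Klee's formula applied to the nonempty set $C$ forces $\ext(C)\cup\extr(C)\neq\emptyset$, and since $C$ is closed, the endpoint of any extreme ray belongs to $C$ and is an extreme point; hence $\extr(C)\neq\emptyset$ already implies $\ext(C)\neq\emptyset$.
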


\begin{corollary}
Every compact convex subset $K$ of  a finite dimensional asymmetric normed space $(X,q)$ has at least one extreme point.
\end{corollary}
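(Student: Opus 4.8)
The plan is to deduce this corollary directly from Theorem~\ref{t:main}, since the final statement is precisely the finite dimensional specialization of that theorem. First I would observe that it suffices to verify the hypothesis of Theorem~\ref{t:main}, namely that $K+\theta(0)$ is $q^{s}$-locally compact whenever $(X,q)$ is a finite dimensional asymmetric normed space and $K$ is $q$-compact and convex. The key structural fact is that $(X,q^{s})$ is then a finite dimensional normed space, hence linearly homeomorphic to $\mathbb{R}^{n}$ with its Euclidean topology, and every finite dimensional normed space is locally compact in its norm topology.

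The main step is therefore to confirm that $K+\theta(0)$ inherits local compactness as a subspace of $(X,q^{s})$. By Lemma~\ref{l:K mas theta cerrado}, the set $K+\theta(0)$ is $q^{s}$-closed, and a closed subset of a locally compact (Hausdorff) space is itself locally compact in the subspace topology. Since $(X,q^{s})\cong\mathbb{R}^{n}$ is locally compact and $K+\theta(0)$ is $q^{s}$-closed, I conclude that $K+\theta(0)$ is $q^{s}$-locally compact. This is exactly the hypothesis required to invoke Theorem~\ref{t:main}.

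Alternatively, and perhaps more cleanly, I would appeal to the remark immediately preceding Theorem~\ref{t:main}: when $K+\theta(0)$ is finite dimensional, local compactness in the $q^{s}$-topology is automatic, and the second sentence of Theorem~\ref{t:main} already records that in this case $K$ has an extreme point. Since $K\subset X$ and $X$ is finite dimensional, the linear span of $K+\theta(0)$ has finite dimension, so the finite dimensionality hypothesis is satisfied trivially. Thus the conclusion follows at once from the final sentence of Theorem~\ref{t:main}.

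I do not anticipate a genuine obstacle here, as the corollary is essentially a restatement of the finite dimensional half of Theorem~\ref{t:main}. The only point requiring minor care is the justification that finite linear dimension of $X$ yields $q^{s}$-local compactness of the closed set $K+\theta(0)$; this rests on the standard fact that all norms on $\mathbb{R}^{n}$ induce the Euclidean topology, so that $(X,q^{s})$ is locally compact and closed subspaces of it remain locally compact. Once this is noted, the statement follows immediately by applying Theorem~\ref{t:main}.
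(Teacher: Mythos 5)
Your proposal is correct and follows exactly the paper's (implicit) route: the corollary is stated without proof precisely because it is the finite dimensional specialization of Theorem~\ref{t:main}, with the observation that $K+\theta(0)\subset X$ automatically has finite dimension. Your first variant, which verifies $q^{s}$-local compactness directly via Lemma~\ref{l:K mas theta cerrado} and the fact that closed subsets of the locally compact space $(X,q^{s})\cong\mathbb{R}^{n}$ are locally compact, is also sound and correctly identifies that the closedness of $K+\theta(0)$ is what makes this work.
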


In contrast with the normed case, let us observe that Theorem~\ref{t:main} is the best we can say about extreme points in $q$-compact convex sets. For instance, in any asymmetric normed space $(X,q)$,  the set $\theta (x)=x+\theta (0)$ is a $q$-compact convex set for whom its only extreme point is $x$ itself.

The main application that we have in mind is the case of the finite dimensional asymmetric spaces. However, as we can see in the following example, there are other cases in which  Theorem~\ref{t:main} is useful too.

\begin{example}
 Consider a lattice norm $\| \cdot\|_n$ in the finite dimensional space $\mathbb R^n$ and take the asymmetric norm $q_n(\cdot):=\| \cdot \, \vee 0\|_n$. 
Let $(X,\| \cdot\|)$ be a Banach space. We define the new space
$
(\mathbb R^n \times X, q ),
$
where $q(\cdot)= q_n(\cdot)+ \| \cdot\|$.
Take  a  compact convex closed set $Z$   in $X$ and a $q_n$-compact convex set $A$ in $\mathbb R^n$. Then the set $B$ defined as the product $A \times Z$ is a $q$-compact set satisfying that  $B+ \theta(0)$ is locally compact ---although it may not have finite dimension---. This implies that it has an extreme point, as an application of Theorem \ref{t:main}. Clearly, B is not $q^s$-compact if $A$ is not $q_n^{s}$ compact.
\end{example}

\begin{remark}
In the case of asymmetric norms on Riesz spaces that are defined by means of lattice norms as $\| \cdot \vee0\|$, it must be noted that in general, $q$-compact sets do not have a supremum belonging to the set. Proposition 4.2 in \cite{new} shows that sets of asymmetric lattices having a supremum belonging to the set are compact, but this is not a general way for finding extreme points of $q$-compact convex sets in asymmetric lattices. The natural extreme point (the supremum) does not belong to the set in general, so Theorem \ref{t:main} is necessary to assure the existence of extreme points. Let us show an easy example. Take the set 
$$A:= \overline{\conv(\{e_n/n \mid n \in \mathbb N\})}^s\subset \ell^1.$$
 The lattice supremum that can be found in $\ell^\infty$ is $\sum_{n=1}^\infty e_n/n$. It does not belong to $\ell^1$, so the set $A$ does not have a supremum. However, each element $e_n/n$ is an extreme point of $A+\theta(0)$, that is, a compact convex set in $(\ell^1, \| \cdot \vee 
  0\|_{\ell^1})$. Clearly, it is not $q^s$-compact.
\end{remark}

\section{Geometric structure of finite dimensional compact convex subsets}\label{seccion geometric}

%
%\begin{lemma} \label{cerraduras}
%Let $A$ and $B$ arbitrary subsets of a topological vector space $X$. The following statements are always true:
%\begin{enumerate}[\rm(1)]
%\item $\overline{A}+B\subset \overline{A+B}$
%\item $\overline{\overline{A}+B}=\overline{A+B}$
%\end{enumerate}
%\end{lemma}
%
%\begin{proof}
%
%(1) Let $x=a+b\in \overline{A}+B$ with $a\in\overline A$ and $b\in B$. 
%For any open neighborhood $U$ of $x$, the set $U-b=\{u-b\mid u\in U\}$ is an open neigborhood of $a$. Since $a\in\overline {A}$, we can find a point $a'\in A\cap (U-b)$. Namely, there exist a point $u\in U$ such that $a'=u-b\in A$. This implies that $a'+b\in U\cap (A+B)$ and thus, $x\in \overline{A+B}$.
%
%(2) Since $A+B\subset \overline{A}+B$ it is obvious that $\overline{A+B}\subset \overline{\overline{A}+B}$. To prove the other contention, we use (1) to observe that $ \overline{A+B}$ is a closed subset containing $\overline{A}+B$, and therefore $\overline{\overline{A}+B}\subset \overline{A+B}$. This completes the proof.
%\end{proof}
%
%As consequence of lemma~\ref{cerraduras} we have the following particular case. For any subset $K$ of an asymmetric normed space $X$, it happens that
%\begin{equation}
%\overline{K}^{s}+\theta (0)\subset \overline{K+\theta (0)}^{s}
%\end{equation}
%
%
%\begin{equation}
%\overline{\overline{K}^{s}+\theta (0)}^{s}=\overline{K+\theta (0)}^{s}
%\end{equation}

For any $q$-compact convex subset $K$ in an asymmetric normed space $X$, let us denote by $E(K)$ the extreme points of $K+\theta (0)$, and $S(K)$ the convex hull of $E(K)$.
By Theorem \ref{t:extrme points}, if $K+\theta(0)$ is $q^s$-locally compact, the set $E(K)$ is non-empty and therefore $S(K)$ is non empty either.
 In the following theorem, we will show that set $S(K)$ plays the role of a center. However, the set $S(K)$ may fail to be
%is not necessarily 
$q^{s}$-compact. 

\begin{theorem}\label{t:structure compact convex sets}
Let $(X,q)$ be an asymmetric normed space and $K$ a $q$-compact convex subset of $X$ such that  $K+\theta (0)$ has finite dimension (for example, if $X$ is finite dimensional). Then 
$$S(K)\subset K\subset S(K)+\theta (0)=K+\theta(0).$$

\end{theorem}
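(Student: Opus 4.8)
The plan is to establish the three relations of the chain separately, observing that everything reduces to the single equality $K+\theta(0)=S(K)+\theta(0)$ together with the inclusion $S(K)\subset K$. First I would dispose of $S(K)\subset K$. By Theorem~\ref{t:extrme points}, every extreme point of $K+\theta(0)$ belongs to $K$; that is, $E(K)\subset K$. Since $K$ is convex and $S(K)=\conv(E(K))$ is the smallest convex set containing $E(K)$, we get $S(K)\subset K$ at once. This already yields one half of the equality, for adding $\theta(0)$ to both sides gives $S(K)+\theta(0)\subset K+\theta(0)$. Moreover, since $0\in\theta(0)$ we have $K\subset K+\theta(0)$, so once the equality $K+\theta(0)=S(K)+\theta(0)$ is known, the inclusion $K\subset S(K)+\theta(0)$ follows immediately. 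Thus everything reduces to proving $K+\theta(0)\subset S(K)+\theta(0)$.

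For this main step I would invoke Klee's theorem. As recorded in the discussion preceding Theorem~\ref{t:main}, the set $K+\theta(0)$ is $q^s$-closed (Lemma~\ref{l:K mas theta cerrado}), is $q$-compact (Proposition~\ref{p:theta}(2)) and hence contains no line (Corollary~\ref{c:noline in compact}), and is $q^s$-locally compact because it has finite dimension. Applying the finite-dimensional form of Theorem~\ref{t:klee} in the locally convex space $(X,q^s)$ yields
$$K+\theta(0)=\conv\big(\ext(K+\theta(0))\cup\extr(K+\theta(0))\big)=\conv\big(E(K)\cup\extr(K+\theta(0))\big).$$
Since $S(K)+\theta(0)$ is the sum of two convex sets, it is convex; hence it suffices to show that $E(K)$ and every extreme ray of $K+\theta(0)$ are contained in $S(K)+\theta(0)$, and then take convex hulls.

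The containment $E(K)\subset S(K)\subset S(K)+\theta(0)$ is immediate. The core of the argument is the analysis of the extreme rays. Let $R=\{a+tb\mid t>0\}$ be an extreme ray of $K+\theta(0)$. Because $K+\theta(0)$ is closed, its endpoint $a$ lies in $K+\theta(0)$ and is an extreme point of it (as recalled in the Preliminaries), so $a\in E(K)\subset S(K)$. The closed ray $\{a+tb\mid t\geq 0\}$ is then contained in the $q$-compact set $K+\theta(0)$, so Lemma~\ref{l:muchas propiedades}(5) forces the direction ray $\{tb\mid t\geq 0\}$ to lie in $\theta(0)$. Consequently $a+tb\in S(K)+\theta(0)$ for every $t\geq 0$, i.e.\ $R\subset S(K)+\theta(0)$. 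Taking convex hulls gives $K+\theta(0)\subset S(K)+\theta(0)$, which completes the equality and hence the entire chain.

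The main obstacle is precisely this extreme-ray step: one must simultaneously control the endpoint (using $q^s$-closedness to place it in $E(K)$) and the direction (using $q$-compactness, via Lemma~\ref{l:muchas propiedades}(5), to place it in $\theta(0)$). These are exactly the two features that make $S(K)$ behave like a center, and the convexity of $S(K)+\theta(0)$ is what permits passage from the generators furnished by Klee's theorem to the full convex hull.
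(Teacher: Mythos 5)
Your proof is correct and follows essentially the same route as the paper: the same reduction to $K+\theta(0)\subset S(K)+\theta(0)$, the same application of Klee's theorem to the $q$-compact, $q^{s}$-closed, line-free set $K+\theta(0)$, and the same analysis of extreme rays (endpoint in $E(K)$ via $q^{s}$-closedness, direction in $\theta(0)$ via Lemma~\ref{l:muchas propiedades}(5)). The only difference is cosmetic: where the paper regroups an explicit convex combination term by term, you invoke convexity of $S(K)+\theta(0)$ to pass from the generators given by Klee's theorem to their convex hull.
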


\begin{proof}
First observe that $E(K)=\ext (K+\theta (0))\subset K$ (Theorem~\ref{t:extrme points}) and due to the convexity of the set $K$, it follows that $S(K)\subset K$. This proves the left contention.

Now, to prove the right contention it is enough to show that $K+\theta (0)$ is contained in $S(K)+\theta (0)$ (observe that this also proves that $K+\theta(0)=S(K)+\theta(0)$). 
To see this, first observe that since $K+\theta (0)$ is $q$-compact, it contains no line (Corollary~\ref{c:noline in compact}) and hence, due to Theorem~\ref{t:klee} it is the convex hull of its extreme points and extreme halflines. Namely, 
every point  $x\in K+\theta (0)$ can be written as a convex combination of  extreme points and points lying in the extreme half lines of $K+\theta (0)$. To be more precise, there exist points $x_1,\dots, x_n\in E(K)$, $y_1,\dots, y_{m}\in \extr (K+\theta (0))$ and scalars $\lambda_1,\dots, \lambda_n$  and $\gamma_1,\dots, \gamma_m$ such that:
$$x=\sum_{i=1}^{n}\lambda_ix_i+\sum_{j=1}^{m}\gamma_jy_j, \quad \quad \sum_{i=1}^{n}\lambda_{i}+\sum_{j=1}^{m}\gamma_{j}=1,$$
being $\lambda_i$ and $\gamma_j$ real positive numbers.
 Now, using Lemma~\ref{l:muchas propiedades} and the fact that $K+\theta(0)$ is $q^{s}$-closed, we have that  every extreme ray of $K+\theta (0)$ must be of the form 
 $$R=\{tz+u\mid z\in \theta(0),~t\geq 0\}$$
 with $u$  an extreme point of $K+\theta (0)$. Thus, for each $j=1,\dots ,m$, there exist $z_j\in \theta (0)$, $t_j\geq 0$ and $u_j\in E(K)$ such that
 $$y_j=t_jz_j+u_j.$$
 Now, we can express the point $x$ as 
 \begin{align*}
 x&=\sum_{i=1}^{n}\lambda_ix_i+\sum_{j=1}^{m}\gamma_jy_j=
 \sum_{i=1}^{n}\lambda_ix_i+\sum_{j=1}^{m}\gamma_j(u_j+t_jz_j)\\
 &=\sum_{i=1}^{n}\lambda_ix_i+\sum_{j=1}^{m}\gamma_ju_j+\sum_{j=1}^{m}\gamma_jt_jz_j.
 \end{align*}
 Finally, observe that $\sum_{i=1}^{n}\lambda_ix_i+\sum_{j=1}^{m}\gamma_ju_j$ is a convex combination of extreme points of $K+\theta(0)$ and thus it lies in $S(K)$. While 
 $$\sum_{j=1}^{m}\gamma_jt_jz_j=\sum_{j=1}^{m}\gamma_jt_jz_j+(1-\sum_{j=1}^{m}\gamma_j)0$$
 is a convex combination of elements of $\theta (0)$ and due to the convexity of this last subset we can conclude that $\sum_{j=1}^{m}\gamma_jt_jz_j$ belongs to $\theta (0)$. Therefore $x\in S(K)+\theta(0)$, and now the theorem is proved.
\end{proof}
 
 \begin{corollary}
Let $K$ be a $q$-compact convex subset in an asymmetric normed space $(X,q)$ such that $K+\theta (0)$ has finite dimension.  
If $K'\subset X$ is any subset satisfying 
$$S(K)\subset K'\subset S(K)+\theta (0)$$
then $K'$ is $q$-compact. 
\end{corollary}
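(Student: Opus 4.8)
The plan is to avoid working with $K'$ directly---which is assumed neither convex nor $q$-closed---and instead to reduce everything to a set already known to be $q$-compact, exploiting the equivalence in Proposition~\ref{p:theta}(2). That proposition asserts that an \emph{arbitrary} set is $q$-compact if and only if its sum with $\theta(0)$ is $q$-compact; so it suffices to prove that $K'+\theta(0)$ is $q$-compact.

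First I would record the elementary fact that $\theta(0)$ is a convex cone satisfying $\theta(0)+\theta(0)=\theta(0)$: if $q(a)=q(b)=0$, then subadditivity gives $q(a+b)\le q(a)+q(b)=0$, while $0\in\theta(0)$ yields the reverse inclusion. Adding $\theta(0)$ to every term of the hypothesis $S(K)\subset K'\subset S(K)+\theta(0)$ then collapses the sandwich into a single equality: the outer term becomes $S(K)+\theta(0)+\theta(0)=S(K)+\theta(0)$, and the inner term is $S(K)+\theta(0)$ as well. Hence $K'+\theta(0)=S(K)+\theta(0)$.

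Next I would invoke Theorem~\ref{t:structure compact convex sets}, which identifies $S(K)+\theta(0)$ with $K+\theta(0)$. Since $K$ is $q$-compact, Proposition~\ref{p:theta}(2) shows that $K+\theta(0)$ is $q$-compact, and therefore $K'+\theta(0)=K+\theta(0)$ is $q$-compact. Applying the converse implication of Proposition~\ref{p:theta}(2) to the set $K'$ then yields that $K'$ itself is $q$-compact, completing the argument.

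There is no serious obstacle here; the proof is essentially a bookkeeping reduction. The only points requiring care are that Proposition~\ref{p:theta}(2) is an equivalence valid for \emph{arbitrary} subsets (so that it may be applied to the possibly non-convex, non-closed set $K'$), and that the finite dimensionality of $K+\theta(0)$ is precisely the hypothesis that makes Theorem~\ref{t:structure compact convex sets} available. Once the identity $K'+\theta(0)=K+\theta(0)$ is established, $q$-compactness transfers for free.
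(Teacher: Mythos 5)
Your proof is correct, and it takes a genuinely different route from the paper's. The paper argues directly with open covers: given a $q$-open cover $\mathcal U$ of $K'$, it covers $S(K)$, hence by Proposition~\ref{p:theta}(1) and Theorem~\ref{t:structure compact convex sets} its union (being $q$-open, so invariant under adding $\theta(0)$) contains $K$; a finite subcover is extracted using the $q$-compactness of $K$, and Proposition~\ref{p:theta}(1) is applied once more to see that the finite union swallows $S(K)+\theta(0)\supset K'$. You instead make an algebraic reduction: the idempotence $\theta(0)+\theta(0)=\theta(0)$ collapses the sandwich to the identity $K'+\theta(0)=S(K)+\theta(0)$, Theorem~\ref{t:structure compact convex sets} identifies this with $K+\theta(0)$, and then both directions of the equivalence in Proposition~\ref{p:theta}(2) transfer compactness from $K$ to $K+\theta(0)=K'+\theta(0)$ and finally to $K'$. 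Your version is shorter and more modular, but it leans on the full strength of Proposition~\ref{p:theta}(2) as an equivalence for arbitrary subsets --- in particular on its nontrivial converse direction ($K'+\theta(0)$ compact $\Rightarrow$ $K'$ compact), which the paper's self-contained cover argument effectively re-proves inline using only the weaker Proposition~\ref{p:theta}(1). Since the paper does state Proposition~\ref{p:theta}(2) as an unrestricted equivalence (citing Lemma 4 and Proposition 5 of \cite{gar}), your appeal to it is legitimate, and the key identity $K'+\theta(0)=K+\theta(0)$, which the paper never isolates, is a clean observation worth having.
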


\begin{proof}
Consider $\mathcal U$ a $q$-open cover of $K'$.  Since $S(K)\subset K'$, $\mathcal U$ is a $q$-open cover of $S(K)$ too. Then, by Proposition~\ref{p:theta}(1) and Theorem \ref{t:structure compact convex sets}, we get
 $$K\subset S(K)+\theta (0)\subset \bigcup\limits_{U\in\mathcal U}U+\theta (0)=\bigcup\limits_{U\in\mathcal U}U.$$
 From this chain of contentions, we conclude that $\mathcal U$ is a $q$-open cover for the $q$-compact set $K$ and thus we can extract a finite subcover $U_1,\dots ,U_p$. Now, since $S(K)$ is contained in  $K$,  the family  $U_1,\dots ,U_p$ is a finite cover for $S(K)$ too. Additionally, since  $\bigcup_{i=1}^{p}U_i$ is $q$-open, we infer from Proposition~\ref{p:theta}(1) that
 $$K'\subset S(K)+\theta(0)\subset \bigcup_{i=1}^{p}U_i+\theta (0)=\bigcup_{i=1}^{p}U_i,$$
 which implies that $K'$ is a $q$-compact set, as desired.
\end{proof}

In \cite{Jon San}, it was proved that every $q$-compact convex set in a $2$-dimensional asymmetric normed lattice is strongly $q$-compact. For proving that, the authors constructed a $q^{s}$-compact set $K$ (denoted by $R(K)$ in \cite{Jon San}) that coincides with the set $S(K)$ previously defined. However, if the dimension of the space is equal or greater than $3$, compact convex sets may not be strongly $q$-compact. An example of this situation is  showed in the following example, where we exhibit a $q$-compact set $K$ in a three dimensional asymmetric lattice, such that  $S(K)$ is not  $q^{s}$-compact and $K$ is not strongly $q$-compact.

\begin{example}
Let $(\mathbb R^{3}, q)$ where $q:\mathbb R^{3}\to[0,\infty)$ is the asymmetric lattice norm defined by the rule:
$$q(x)=\max\{\max\{x_i, 0\}\mid i=1,2,3\}\quad x=(x_1,x_2,x_3)\in \mathbb R^{3}.$$
Let $K=\conv(A\cup\{(0,0,0),(0,1,1)\})$ where $A$ is the set defined as 
$$A=\{(x_1,0,x_3)\mid x_1^{2}+x^{2}_3=1,\,x_1\in (0,1],\,x_3\geq 0 \}.$$
For any $q$-open cover $\mathcal U$ of $K$, there exist an element $U\in \mathcal U$ such that $(0,1,1)\in U$. This implies that 
$$(0,1,1)+\theta (0)\subset U+\theta (0)=U$$
and therefore the cover $\mathcal U$ is a $q$-open (and $q^{s}$-open) cover  for $\overline {K}^{s}$ which  is $q^{s}$-compact. Thus, we can extract a finite subcover $\mathcal{V}\subset\mathcal U$ for   $\overline {K}^{s}$. This cover $\mathcal V$ is a finite subcover for $K$ too, and then we can conclude that $K$ is $q$-compact.

In this case, $S(K)=K$ which is not $q^{s}$-compact. To finish this example, let us note that $K$ is not strongly $q$-compact. For this, simply observe that any set $K_0$ satisfying $K_0\subset K\subset K_0+\theta (0)$, must contain the set $A$. If $K_0$ is additionally $q^{s}$-compact, then it is $q^{s}$-closed too and then
$$\overline{A}^{s}\subset \overline{K_0}^{s}=K_0\subset K$$
which is impossible. 
\end{example}

\textbf{Final Remark.}
Since we use Klee's theorem, the hypothesis of $K+\theta (0)$ being locally compact in Theorem~\ref{t:main} is essential. Further, there  are many examples of closed bounded convex subsets in Banach spaces without extreme points (for example, the unitary closed ball of $c_0$). However, we don't know if there is an example of an infinite dimensional $q$-compact convex set without extreme points. So, we finish our paper with the following question.

\begin{question}
Does every $q$-compact convex set in an asymmetric normed space contain an extreme point?
\end{question}

%\section{Bibliografia}

\vspace{2cm}

\noindent[Natalia Jonard-P\'erez]  Departamento de Matem\'aticas,
Facultad de Matem\'aticas,
Campus Espinardo,
30100 Murcia, Spain, e-mail: natalia.jonard@um.es

\vspace{1cm}

\medskip

\noindent[Enrique A. S\'anchez P\'erez] Instituto Universitario de Matem\'{a}tica Pura y Aplicada, Universidad Polit\'ecnica de Valencia, Camino de Vera s/n, 46022 Valencia, Spain, e-mail: easancpe@mat.upv.es

\end{document}